\journal{Journal of \LaTeX\ Templates}
\newtheorem{thm}{Theorem}
\newtheorem{lem}[thm]{Lemma}
\newdefinition{rmk}{Remark}
\newproof{pf}{Proof}
\newproof{pot}{Proof of Theorem \ref{mainresult}}
\begin{document}

\begin{frontmatter}

\title{The Hamilton-Waterloo Problem with $C_4$ and $C_m$ Factors\tnoteref{mytitlenote}}
\tnotetext[mytitlenote]{This work is supported by The Scientific and Technological Research Council of Turkey (T\"UB\.{I}TAK) under project number 113F033.}


\address[mymainaddress]{Department of Engineering Sciences, Faculty of Engineering, Istanbul University, Avcilar Campus,  Avcilar - ISTANBUL, 34320 TURKEY}
\address[mysecondaryaddress]{Department of Mathematics, Gebze Technical University, Cayirova Campus, Gebze - Kocaeli, 41400 TURKEY}

\author[mymainaddress]{U\u{g}ur Odaba\c{s}{\i} }
\ead{ugur.odabasi@istanbul.edu.tr}
\author[mysecondaryaddress]{Sibel \"Ozkan \corref{mycorrespondingauthor}}
\cortext[mycorrespondingauthor]{Corresponding author}
\ead{s.ozkan@gtu.edu.tr}

\begin{abstract}
The Hamilton-Waterloo problem with uniform cycle sizes asks for a $2-$ factorization of the complete graph $K_v$ (for odd {\em v}) or $K_v$ minus a $1-$factor (for even {\em v}) where $r$ of the factors consist of $n-$cycles and $s$ of the factors consist of $m-$cycles with $r+s=\left \lfloor \frac{v-1}{2} \right \rfloor$. In this paper, the Hamilton-Waterloo Problem with $4-$cycle and $m-$cycle factors for odd $m\geq 3$ is studied and all possible solutions with a few possible exceptions are determined.
\end{abstract}

\begin{keyword}
\texttt 2-factorizations\sep Hamilton-Waterloo Problem\sep Oberwolfach Problem \sep Resovable decompositions \sep Cycle decompositions
\end{keyword}

\end{frontmatter}


\section{Introduction}
A {\em decomposition} of a graph {\em G} is a set $\mathcal{H}=\{H_1,H_2,\dots,H_k\}$ of edge-disjoint subgraphs of {\em G} such that $\bigcup\limits_{i=1}^k E(H_i)=E(G)$. A {\em H-decomposition} is a decomposition of {\em G} such that  $H_i \cong H$ for all $H_i \in \mathcal{H}$. If each $H_i$ is a cycle (or union of cycles), then $\mathcal{H}$ is called a {\em cycle decomposition}. A $\{F_1^{k_1},F_2^{k_2}, \dots ,F_l^{k_l} \}-${\em factorization} of a graph {\em G} is a decomposition which consists precisely of $k_i$ factors isomorphic to $F_i$.

The case $H\cong K_2$ is known as {\em 1-factorization}. Another important case is {\em 2-factorization} where every vertex in the graph $H$ has degree $2$. Whether there exists a $2-$factorization of $K_v$ with prescribed $2-$factors is a long standing important problem in combinatorial design theory.

One of the $2-$factorization problems is the  {\em Oberwolfach Problem} which was first formulated by Ringel at an Oberwolfach meeting in 1967 and is related to the possible seating arrangements at the conference. The problem was inspired by a question whether {\em v} mathematicians could be seated in such a way that each mathematician sits next to each other mathematician exactly once over $\left \lfloor \frac{v-1}{2} \right \rfloor$ days, where there are $k_i$ round tables with $m_i$ seats for $ 1\leq i \leq t$ satisfying $\sum\limits_{i=1}^{t}k_im_i=v$. In graph theory language, the problem asks for a $2-$factorization of the complete graph $K_v$ (or for even $v$, $2-$factorization of $K_v-I$) into  $2-$factors each of which is isomorphic to a given $2-$factor {\em H}. If {\em H} consists of $k_i$  $m_i$-cycles, $1 \leq i\leq t$,  then the corresponding {\em Oberwolfach problem} is denoted by $OP(m_1^{k_1},m_2^{k_2}, \dots ,m_t^{k_t})$.

It is known that the solutions to the cases $OP(3^2), OP(3^4), OP(4, 5)$  and $OP(3^2, 5)$ do not exist \cite{alspach, kohler, piotrowski german unpublished paper}. The Oberwolfach Problem for a single cycle size $OP(m^k)$ for all $m\geq3$ has been solved in two separate cases: odd cycles in \cite{alspach} and the even cycle case in \cite{hoffman}.

A generalization of the Oberwolfach Problem is the {\em Hamilton-Waterloo Problem} where the conference takes places in two venues; Hamilton and Waterloo, the first of which has {\em k} round tables, each seating $n_i$ people for $i = 1, \dots ,k$ , the second of which has {\em l} round tables each seating  $m_i$ people for $i = 1, \dots ,l$ (necessarily $\sum\limits_{i=1}^{k}n_i=\sum\limits_{i=1}^{l}m_i=v$).

If we let $n = n_1 = n_2 = \dots = n_k$ and $m = m_1 = m_2 = \dots = m_l$, then each $2-$factor is composed of either $n$-cycles or {\em m}-cycles. This version of the Hamilton-Waterloo Problem, with uniform cycle sizes, has attracted most of the attention and we use the notation to denote the problem with $r$ factors of $n-$cycles and $s$ factors of $m-$cycles by $(n, m)-$HWP$(v; r, s)$. The obvious necessary conditions for the existence of a solution to $(n, m)-$HWP$(v; r, s)$ are given by Adams et al. in \cite{adams}:

\begin{lem} \textbf{\emph{\cite{adams}}} \label{lemma:necessary}
Let v, n, m, r and s be non-negative integers with $n,m\geq 3$. If there exists a solution to $(n, m)-${\em HWP}$(v; r, s)$, then
\begin{itemize}
\item[1)] if $r > 0$,  $v \equiv 0 \bmod{n}$,
\item[2)] if  $s > 0$, $v \equiv 0 \bmod{m}$,
\item[3)] $r+s=\left \lfloor \frac{v-1}{2} \right \rfloor$.
\end{itemize}
\end{lem}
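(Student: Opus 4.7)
The plan is to verify each of the three conditions by an elementary counting or partitioning argument, since these are standard necessary conditions rather than a deep theorem. Assume throughout that we have a genuine solution to $(n,m)$-HWP$(v;r,s)$, which means a $2$-factorization of $K_v$ (when $v$ is odd) or $K_v - I$ (when $v$ is even), with $r$ factors composed solely of $n$-cycles and $s$ factors composed solely of $m$-cycles.

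For conditions (1) and (2), I would argue by vertex partitioning. If $r>0$, pick any $n$-cycle factor $F$. Because $F$ is a $2$-regular spanning subgraph whose connected components are all $n$-cycles, the $v$ vertices of the host graph are partitioned into cycles of length exactly $n$. Hence $n$ divides $v$. The argument for (2) is identical with $m$ in place of $n$: any $m$-cycle factor partitions $V(K_v)$ into $m$-cycles, forcing $m \mid v$.

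For condition (3), I would count edges at a vertex. In $K_v$ with $v$ odd, every vertex has degree $v-1$, and each $2$-factor contributes exactly $2$ to this degree, so the total number of factors is $(v-1)/2$. When $v$ is even, the host graph $K_v-I$ has degree $v-2$ at each vertex, giving $(v-2)/2$ factors. In both cases the total number of factors equals $\lfloor (v-1)/2 \rfloor$, and since every factor is either an $n$-cycle factor or an $m$-cycle factor, we get $r+s=\lfloor (v-1)/2 \rfloor$.

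There is no serious obstacle here; the statement is really a packaging of three first-principles observations (divisibility from a cycle partition, and degree counting in $K_v$ or $K_v-I$). The only minor care needed is to handle the parity of $v$ uniformly so that both cases collapse into the single floor expression $\lfloor (v-1)/2 \rfloor$, which is exactly why the floor appears in the statement. I would present the three bullets in the same order as in the lemma, keeping each verification to a sentence or two.
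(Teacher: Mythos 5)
Your argument is correct; the paper itself gives no proof of this lemma, citing it directly from Adams et al., and your three observations (a spanning union of $n$-cycles partitions the $v$ vertices into blocks of size $n$, and the degree count $v-1$ or $v-2$ divided by $2$ gives the number of $2$-factors) are exactly the standard justification. No gaps to report.
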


The first result on the Hamilton-Waterloo Problem is settled by Adams et al. \cite{adams} in $2002$. They solved the cases $(n,m)\in \{(4,6),(4,8),(4,16),(8,16),(3,5),$ $(3,15),(5,15)\}$ and settled the problem for all $v\leq 16$. With a few possible exceptions  when $m=24$ and $48$, Danziger et al. \cite{danziger} solved the problem for the case $(n,m)=(3,4)$.

The case $n=3$ and $m=v$, i.e. triangle-factors and Hamilton cycles, has attracted much attention and remarkable progresses are obtained by Horak et al. \cite{horak}, Dinitz and Ling \cite{dinitz1, dinitz2}. In \cite{bryant}, Bryant et al. have settled the Hamilton-Waterloo Problem for bipartite $2-$factors, and in \cite{buratti} Buratti and Rinaldi studied regular 2-factorizations leading to some cyclic solutions to Oberwolfach and Hamilton-Waterloo Problems, and also in \cite{buratti1}, an infinite class of cyclic solutions to the Hamilton-Waterloo Problem is given.

Fu and Huang \cite{fu} solved the case of $4-$cycles and $m-$cycles for even {\em m}, and also settled all cases where $m=2n$ and {\em n} is even in 2008. Two years later Keranen and \"Ozkan \cite{sibel} solved the case of $4-$cycles and a single factor of  $m-$cycles for odd {\em m}.

Most of the results involve the cases of even cycles or the cycles of same parity. Solving the Hamilton-Waterloo Problem for cycles with different parity is a more difficult problem and is not studied much.

Here we consider $4-$cycle and odd cycle factors, and complete the remaining cases in \cite{sibel}. Our result also complements the results of Fu and Huang \cite{fu} and shows that the necessary conditions are sufficient also for odd $m$ with a few exceptions. Here is our main result.

\begin{thm} \label{mainresult}
For all positive integers r, s and odd $m\geq 3$, a solution to $(4, m)-${\em HWP}$(v; r, s)$ exists if and only if $4|v$, $m|v$ and $r+s=\frac{v-2}{2}$ except  possibly when $r=2$ and $v=8m$ or $v=24,48$ when $m=3$ and $r=6$.
\end{thm}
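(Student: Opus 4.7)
The necessary conditions in Lemma \ref{lemma:necessary} combined with $\gcd(4,m)=1$ (since $m$ is odd) force $4m\mid v$, so for sufficiency I write $v=4mt$ with $t\ge 1$. Since the case $s=1$ is already handled in \cite{sibel}, the remaining work is for $s\ge 2$, and the plan splits naturally into a \emph{base case} at $v=4m$ and a \emph{recursive construction} that pushes it to arbitrary $v=4mt$.

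For the base case I would construct, for every admissible pair $(r,s)$ with $r+s=2m-1$ and $s\ge 2$, a $(4,m)$-HWP$(4m;r,s)$. A natural attack is to work sharply transitively (or $1$-rotationally) over an abelian group of order $4m$ or $4m-1$, producing starter $2$-factors of each type whose orbits partition the edges of $K_{4m}-I$; varying the proportion of $C_4$- and $C_m$-type starters should yield every admissible $(r,s)$. For the recursion, I would partition $V(K_v)$ into $t$ blocks of size $4m$ and decompose $K_v-I$ as the intra-block union of $t$ copies of $K_{4m}-I_i$ together with the inter-block equipartite graph $K_{4m\times t}$. Stacking the $j$-th slot of the base-case factorization across all $t$ blocks (using the same cycle type on every block in each slot) gives a uniform $2$-factor of $K_v$ of that type, and $K_{4m\times t}$ contributes a further $2m(t-1)$ 2-factors. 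The ingredients needed are then $C_4$- and $C_m$-factorizations (and mixed factorizations) of $K_{4m\times t}$ in every admissible proportion: classical results on equipartite $C_4$-factorizations supply the $C_4$-side, and Hamilton-cycle-type constructions adapted to produce $C_m$-factors supply the $C_m$-side.

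The principal obstacle is that when $t=2$ the inter-block graph is the bipartite $K_{4m,4m}$, which contains no odd cycle at all. Thus for $v=8m$ the inter-block layer can supply only $C_4$-factors, so the stacking argument realises only totals with $r\ge 2m$; the small-$r$ regime must be attacked by a different construction, and the exceptional case $r=2$, $v=8m$ in the statement is precisely where no such alternative construction can be made to work cleanly. The exceptions $v\in\{24,48\}$ for $m=3$, $r=6$ are inherited from analogous obstructions in the $(3,4)$-case settled, with the same exceptions, by Danziger et al.\ in \cite{danziger}. Outside these configurations, a careful case analysis on $t$, split further by the size of $r$ (roughly $r\le 2m$ versus $r>2m$, so that either the base-case or the multipartite layer bears the brunt of the $C_4$-demand), should combine the base-case and multipartite ingredients to realise every admissible $(r,s)$ with $r+s=2mt-1$.
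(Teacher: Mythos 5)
Your outline is a plan rather than a proof, and its two load-bearing steps are exactly the ones that carry all the difficulty. First, the base case: you assert that a sharply transitive or $1$-rotational construction over a group of order $4m$ or $4m-1$ ``should yield every admissible $(r,s)$'' for $K_{4m}-I$, but no starters are exhibited and there is a genuine parity obstruction lurking here. The paper builds $K_{4mt}-I$ from layers isomorphic to $C_m[4]$ plus a leftover $C_4$-factor coming from $mtK_4$; each $C_m[4]$ layer admits a $C_4$-factorization, a $C_m$-factorization, and a $\{C_4^2,C_m^2\}$-factorization, but provably \emph{no} $\{C_4^1,C_m^3\}$-factorization (Lemma \ref{lemma C4^1,Cm^3-factorization}). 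Consequently this natural construction only reaches odd $r$, and even $r$ requires a separate edge-switching device (Lemma \ref{lemma:switch}) that trades a $1$-factor of one $C_m[4]$ layer against the $K_4$'s to manufacture a $\{C_4^2,C_m^3\}$-factorization. Nothing in your sketch engages with this parity issue, and a blanket appeal to transitive starters does not dispose of it.

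Second, your recursion needs $\{C_4,C_m\}$-factorizations of the equipartite graph $K_{4m:t}$ in \emph{every} admissible proportion: with uniform factorizations only, stacking realises just $r\in\{r_0,\,r_0+2m(t-1)\}$ for $0\le r_0\le 2m-1$ and misses the entire intermediate range. Mixed $C_4$/$C_m$-factorizations of equipartite graphs with one cycle length even and one odd are not ``classical''; this is an equipartite Hamilton--Waterloo problem of essentially the same difficulty as the theorem you are trying to prove, so invoking it is circular. The paper avoids the issue entirely by using the decomposition $K_{4mt}\cong K_{mt}[4]\oplus mtK_4$ and refining a $C_m$-factorization of $K_{mt}$ (Alspach et al.\ and Hoffman--Schellenberg) into $C_m[4]$ layers, so that the only equipartite input ever needed is Liu's uniform $C_m$-factorization of $K_{4:mt}$ (for the $r=1$ and $r=2$ cases), not a mixed one. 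Your diagnosis of the $t=2$ bipartite obstruction for the $r=2$, $v=8m$ exception is correct in spirit and matches where the paper's argument also breaks down, but as written the proposal does not constitute a proof of the theorem.
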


\section{Preliminary Results}

If $G_1$ and $G_2$ are two edge disjoint graphs on the same vertex set, then  $G_1\oplus G_2$ will denote the graph on the same vertex set with $E(G_1\oplus G_2)=E(G_1)\cup E(G_2)$. Also $\alpha G$ will denote the vertex disjoint union of the $\alpha$ copies of {\em G}.

We will denote a {\em complete equipartite graph} of {\em b} parts of size {\em a} each by $K_{a:b}$. $K_{a:2}$ is called {\em complete bipartite graph} and denoted by $K_{a,a}$ as well.

Liu \cite{liu} gave a complete solution to the Oberwolfach Problem for complete equipartite graphs where all cycles have the same length and we will use this result in our main construction.

\begin{thm} \textbf{\emph{\cite{liu}}} \label{liu}
The complete equipartite graph $K_{a:b}$ has a $C_l-$factorization for  $l \geq 3$ and $a\geq 2$ if and only if  $l|ab$, $a(b-1)$ is even, l is even if $b=2$ and $(a,b,l)\neq (2,3,3),(6,3,3)$, $(2,6,3),(6,2,6)$.
\end{thm}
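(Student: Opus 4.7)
The plan is to establish necessity by short counting arguments and then to prove sufficiency by assembling classical ingredients and explicit constructions, splitting into cases according to the size of $l$ relative to $ab$. Necessity is immediate: the degree of every vertex of $K_{a:b}$ is $a(b-1)$, so a $2$-factorization forces $a(b-1)$ even; a $C_l$-factor uses all $ab$ vertices and hence requires $l \mid ab$; when $b=2$ the graph $K_{a,a}$ is bipartite so $l$ must be even; and for each of the four listed triples $(a,b,l)$ a short parity or edge-partition argument across the parts rules out a decomposition.

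For sufficiency, I would first handle the \emph{Hamiltonian case} $l = ab$ separately by invoking the classical result that $K_{a:b}$ admits a Hamilton decomposition whenever the degree $a(b-1)$ is even (Laskar--Auerbach and Hetyei-type constructions). Next, the \emph{inner case} $l \mid a$ (with $l$ even if $b=2$): start from a proper edge-coloring of $K_b$ (or a near-$1$-factorization when $b$ is even), blow up each color class to a copy of $K_{a,a}$, and invoke a $C_l$-factorization of each such $K_{a,a}$. Finally, the \emph{generic case} $a < l < ab$ with $l \nmid a$: write $l = c\,l'$ for a suitable small divisor $l'$ of $a'b'$ with $a' \mid a$, $b' \mid b$, take a $C_{l'}$-factorization of $K_{a':b'}$ (by induction on the parameters), blow each vertex up by a factor $c$, and decompose each blown-up cycle, which is a copy of $K_{c:l'}$, by a $C_l$-factorization obtained either from the Hamiltonian subcase or from smaller instances already handled.

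The key technical ingredient that powers both the inner and generic cases is a supply of $C_l$-factorizations of the small complete equipartite graphs that arise as building blocks. I would produce these via rotational constructions: when $l$ is odd, the parity condition $a(b-1)$ even forces either $a$ even or $b$ odd, and in each subregime one can place the parts as cosets of a cyclic subgroup of $\mathbb{Z}_{ab}$ and exhibit a starter cycle whose orbit under the natural rotation yields one $C_l$-factor, with the remaining factors obtained by a further rotation. For $l$ even one uses analogous constructions together with the bipartite case as a base.

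The main obstacle is the small-parameter boundary around the exceptions. Base cases with $l \in \{3,4,5,6\}$ and small $a,b$ require explicit starter cycles or ad hoc constructions, and one must simultaneously verify that the four listed triples genuinely fail while showing that all other small triples succeed, so that the induction in the generic case has a legitimate anchor. I expect this case analysis to occupy most of the proof, while the recursive blow-up step itself is routine once the correct small instances and the Hamiltonian subcase are established.
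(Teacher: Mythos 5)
This statement is not proved in the paper at all: it is Liu's theorem, imported verbatim from \cite{liu} and used as a black box in the main construction (e.g.\ to factor $K_{4:mt}$ into $C_m$-factors). So there is no ``paper proof'' to compare against; what you have written is an outline of how one might prove Liu's result itself, which in the literature occupies two substantial papers.

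As an outline it is broadly sensible (necessity, a Hamiltonian base case, blow-ups, and a recursive generic case are indeed the right shape), but it contains a concrete error and defers all of the genuinely hard content. The error: when you take a $C_{l'}$-factorization of $K_{a':b'}$ and blow each vertex up by $c$, each blown-up cycle is the lexicographic product $C_{l'}[c]$, not a copy of $K_{c:l'}$ --- for $l'\geq 4$ these graphs are very different ($C_{l'}[c]$ only has edges between consecutive blown-up vertices), so you cannot invoke the ``Hamiltonian subcase'' for $K_{c:l'}$ there. What you actually need is a Hamilton decomposition of $C_{l'}[c]$, which does exist (Baranyai--Sz\'asz, cited in this paper), so the step is repairable, but as written it does not go through. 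Beyond that, the proposal's load-bearing ingredients --- the explicit starter cycles for the rotational constructions, the verification that exactly the four listed triples fail, and the resolution of all small parameter triples that anchor the induction --- are exactly the parts you say you ``would produce'' or ``expect to occupy most of the proof.'' Those are the theorem; without them this is a plan, not a proof. For the purposes of the present paper, the correct move is simply to cite \cite{liu}, as the authors do.
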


Let $H$ be a finite additive group and let $S$ be a subset of $H-\{0\}$ such that the inverse of every element of $S$ also belongs to $S$. The {\em Cayley graph} over $H$ with connection set $S$, denoted by $Cay(H,S)$, is the graph with vertex set $H$ and edge set $E(Cay(H,S))=\{(a,b)|a,b\in H, a-b\in S\}$. Note that, since $ S=S^{-1} $, here $Cay(H,S)$ is not directed.

Let {\em G} be graph and $G_0, G_1, G_2, \dots , G_{k-1}$ be vertex disjoint copies of {\em G} with $v_i \in V(G_{i})$ for each $v \in V(G)$. Then the graph $G[k]$ is a graph with vertex set $V(G[k])=V(G_0)\cup V(G_1)\cup V(G_2)\cup \dots \cup V(G_{k-1})$  and edge set
$E(G[k])=\{u_iv_j: uv \in E(G) \quad \mbox{and} \quad 0\leq i,j \leq k-1\}$. For example $K_m[2]\cong K_{2m}-I$ and $K_2[m]\cong K_{m,m}$ where $I$ is a $1-$factor of $K_{2m}$.

It is easy to see that if a graph {\em G} has an {\em H}$-$decomposition, then there exists an $H[k]-$ decomposition of $G[k]$. Moreover if a graph {\em G} has an $H-$factorization, then there exists an $H[k]-$factorization of $G[k]$.

In fact, this graph operation is a generalization of H$\ddot{a}$ggkvist's {\em doubling construction} and it coincides with a special case of a graph product called lexicographic product. H$\ddot{a}$ggkvist \cite{haggkvist} constructed $2-$factorizations containing even cycles using $G[2]$.

\begin{lem} \textbf{\emph{\cite{haggkvist}}} \label{lemma:Haggvist's lemma}
Let $G$ be a path or a cycle with $m$ edges and let $H$ be a $2-$ regular graph on $2m$ vertices where each component of $H$ is a cycle of even length. Then $G[2]$ has an $H-$decomposition.
\end{lem}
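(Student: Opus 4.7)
The plan is to exhibit two edge-disjoint copies of $H$ inside $G[2]$ by a block-wise explicit construction. Write $H$ as $C_{2a_1}\cup C_{2a_2}\cup\cdots\cup C_{2a_t}$ with $a_j\ge 2$ and $\sum a_j=m$, and cut the $m$ edges of $G$ into $t$ consecutive arcs $B_1,\dots,B_t$ of lengths $a_1,\dots,a_t$ (cyclically if $G$ is a cycle, linearly if $G$ is a path). Label the vertices along arc $B_j$ as $u_j^0,u_j^1,\dots,u_j^{a_j}$, so that $u_j^{a_j}=u_{j+1}^0$ at each junction between consecutive arcs.

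For each edge $e$ of $G$ the four edges of the corresponding $K_{2,2}$ in $G[2]$ must be split two-and-two between the target factors $F_1$ and $F_2$. I fix, for each arc $B_j$, ``boundary colours'' $c_j^L,c_j^R\in\{0,1\}$ chosen so that $c_j^R\ne c_{j+1}^L$ at every junction; such a choice exists trivially in both the cyclic and the linear setting for any $t\ge 1$. Then at the first edge of $B_j$ I put into $F_1$ the two edges of the $K_{2,2}$ incident to $(u_j^0,c_j^L)$; at the last edge, the two incident to $(u_j^{a_j},c_j^R)$; and at every interior edge of $B_j$ the ``parallel'' matching $\{(u_j^{k-1},a)(u_j^k,a):a\in\{0,1\}\}$. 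The remaining two edges at each $K_{2,2}$ go to $F_2$.

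Tracing the $F_1$-edges assigned to arc $B_j$ produces the single closed walk
\[
(u_j^0,c_j^L)\to (u_j^1,0)\to\cdots\to (u_j^{a_j-1},0)\to (u_j^{a_j},c_j^R)\to (u_j^{a_j-1},1)\to\cdots\to (u_j^1,1)\to (u_j^0,c_j^L),
\]
which is a $C_{2a_j}$. The junction rule $c_j^R\ne c_{j+1}^L$ forces the two cycles of $F_1$ meeting at a junction vertex to use its two different copies, so the components of $F_1$ are vertex-disjoint and together yield $F_1\cong H$. The same argument with the ``parallel'' and ``crossing'' matchings swapped and the colours $c_j^L,c_j^R$ replaced by their complements gives $F_2\cong H$; in the path case $F_1$ and $F_2$ automatically omit the two ``extra'' vertex copies at the two endpoints of $G$.

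The main case that needs extra care is the degenerate arc $a_j=2$, in which there is no interior edge and the two endpoint $K_{2,2}$s alone must close into a $C_4$; a direct check shows that they do. Once this is verified, edge-disjointness and the exhaustion of all $4m$ edges of $G[2]$ are immediate from the two-and-two split at every $K_{2,2}$, completing the decomposition.
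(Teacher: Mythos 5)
Your construction is correct. The paper itself offers no proof of this statement: it is quoted verbatim as H\"aggkvist's lemma from the cited reference and used as a black box, so there is nothing internal to compare against. Your argument is a sound self-contained reconstruction, and it is essentially the classical one: split the $m$ edges of $G$ into consecutive arcs matching the cycle lengths of $H$, use the two perfect matchings of each interior $K_{2,2}$ (parallel/crossing) to carry two parallel strands along each arc, and use the two ``stars'' of the boundary $K_{2,2}$s to fold the strands into a $C_{2a_j}$ at each end of the arc. The one condition that needs care, $c_j^R\neq c_{j+1}^L$ at every junction (so that the two cycles of a factor meeting at a junction vertex occupy its two distinct copies), is indeed trivially satisfiable, e.g.\ by taking $c_j^L=0$, $c_j^R=1$ for all $j$, and this works uniformly in the cyclic case including $t=1$, where it reduces to the standard decomposition of $C_m[2]$ into two Hamiltonian cycles. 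Your checks of the degenerate arc $a_j=2$, of the complementary factor $F_2$ (whose return strand alternates but still lands on the unused copies), and of the two omitted vertex copies at the endpoints in the path case are all accurate, so the proof is complete.
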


Baranyai and Szasz \cite{baranyai} have shown that if {\em G} consists of $x$ Hamilton cycles and if {\em H} has $y$ vertices and consists of $z$ Hamilton cycles then their lexicographic product is decomposable into $xy+z$ Hamilton cycles. So, $C_m[n]$ has a $C_{mn}-$factorization. Also Alspach et al. \cite{alspach} have shown that for an odd integer {\em m} and a prime {\em p} with $3\leq m\leq p$, $C_m[p]$ has a $C_p-$factorization.

By \cite{alspach} and \cite{hoffman}, solutions to $OP(4^{v/4})$ and  $OP(m^{v/m})$ exist except $m=3$ and $v=6$ or $v=12$. That is a solution to $(4, m)-$HWP$(v; r, s)$ exists for $r=0$ or $s=0$ with exceptions $(v,m,r)=(6,3,0)$ and $(v,m,r)=(12,3,0)$.  So, we can assume that $r\neq 0$ and $s\neq 0$.

In our case $4|v$,  $m|v$ and $m$ is odd. Then there exists a $t\in \mathbb{Z^+}$ such that $v=4mt$.

Note that;

\begin{equation} \label{one}
K_{4mt}\cong K_{mt}[4]\oplus mt K_4
\end{equation}
 or equivalently
  \begin{equation} \label{two}
 K_{4mt}-I\cong K_{mt}[4]\oplus mt C_4
 \end{equation}
where $V(K_{4mt})=V(K_{mt}[4])$ and $I$ is a $1-$factor in $K_{4mt}$. So $K_{4mt}$ has a $\{(C_m[4])^{(mt-1)/2},K_4\}-$factorization for odd {\em t} by (\ref{one}) and, $K_{mt}$ has a $C_m-$ factorization for odd {\em t} by \cite{alspach}. In short, for odd {\em t} we have
\begin{equation} \label{three}
K_{4mt} \cong \overbrace{tC_m[4]\oplus tC_m[4]\oplus \dots \oplus tC_m[4] }^{(mt-1)/2} \oplus mtK_4.
\end{equation}

Similarly, $K_{4mt}$ has a  $\{(C_m[4])^{^{(mt-2)/2}}, K_{4,4},K_4\}-$factorization for even {\em t} by (\ref{one}) and, $K_{mt}$ has a $\{C_m^{(mt-2)/2},K_2\}-$factorization for even {\em t} by \cite{hoffman}. In short, for even {\em t}, we have
\begin{equation} \label{four}
K_{4mt} \cong \overbrace{tC_m[4]\oplus tC_m[4]\oplus \dots\oplus tC_m[4] }^{(mt-2)/2} \oplus \frac{mt}{2}K_{4,4} \oplus mtK_4.
\end{equation}
with exceptions  $m=3$ and $t=2$ or $t=4$.

In our proofs, we will use these decompositions with appropriate factorizations of  $C_m[4]$'s.

It is obvious that a $2-$factorization of $C_m[4]$ has exactly four factors.  The followings will be shown:
\begin{itemize}
\item[(i)] $C_m[4]$ has a $C_4-$factorization (Lemma \ref{lemma C4-factorization}),
\item[(ii)] $C_m[4]$ has a $C_m-$factorization (Lemma \ref{lemma Cm-factorization}),
\item[(iii)] $C_m[4]$ has a $\{C_4^2,C_m^2\}-$factorization (Lemma \ref{lemma C4^2,Cm^2-factorization}),
\item[(iv)] $C_m[4]$ has no $\{C_4^1,C_m^3\}-$factorization for odd $m$ (Lemma \ref{lemma C4^1,Cm^3-factorization}).
\end{itemize}

\begin{lem} \label{lemma C4-factorization}
For every integer $m\geq 3$, $C_m[4]$ has a $C_4-$factorization.
\end{lem}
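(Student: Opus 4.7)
The plan is to apply H\"aggkvist's Lemma (Lemma~\ref{lemma:Haggvist's lemma}) twice, passing through the blow-up identity $C_m[2][2]\cong C_m[4]$ between the two applications. First I would invoke Lemma~\ref{lemma:Haggvist's lemma} with $G=C_m$ and $H=C_{2m}$; since $2m$ is even, $C_{2m}$ is a single even cycle on exactly $2m=|V(C_m[2])|$ vertices, so the hypotheses are satisfied, yielding a $C_{2m}$-factorization of $C_m[2]$ into two Hamilton cycles.

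Next, using the preliminary observation that any $H$-factorization of $G$ lifts to an $H[k]$-factorization of $G[k]$, I would blow up by $[2]$ to obtain a factorization of $C_m[4]\cong C_m[2][2]$ into two spanning copies of $C_{2m}[2]$. Then I would apply Lemma~\ref{lemma:Haggvist's lemma} once more, this time to each $C_{2m}[2]$ factor separately, with $G=C_{2m}$ and $H=m\cdot C_4$; here $H$ is $2$-regular on $4m=|V(C_{2m}[2])|$ vertices and every component is the even cycle $C_4$, so the hypothesis again holds. Each $C_{2m}[2]$ splits into two $C_4$-factors, each being a disjoint union of $m$ copies of $C_4$, producing in total four $C_4$-factors of $C_m[4]$, which is the desired factorization.

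I do not expect a serious obstacle: at each invocation of Lemma~\ref{lemma:Haggvist's lemma} the only nontrivial requirement is that the components of $H$ have even length, and this is automatic here since $C_{2m}$ and $C_4$ are both even cycles for every $m\geq 3$. In particular the argument is uniform in $m$ and needs no case split on parity, so no \emph{ad hoc} small-case construction for odd $m$ (or any other exceptional case) should be necessary.
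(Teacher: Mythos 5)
Your proposal is correct and follows essentially the same route as the paper: both pass through $C_m[4]\cong C_m[2][2]$ and start by applying H\"aggkvist's Lemma to obtain a $C_{2m}$-factorization of $C_m[2]$. The only difference is cosmetic: the paper splits each $C_{2m}$ into two $1$-factors and uses $K_2[2]\cong C_4$ to blow these up into $C_4$-factors, whereas you blow up the $C_{2m}$-factors to $C_{2m}[2]$-factors and invoke H\"aggkvist's Lemma a second time (with $H=mC_4$) to split each into two $C_4$-factors --- both steps are valid and yield the same four $C_4$-factors.
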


\begin{proof}
Note that $C_m[4]\cong C_m[2][2]$. By Lemma \ref{lemma:Haggvist's lemma}, $C_m[2]$ can be decomposed into $C_{2m}-$factors, and each $C_{2m}$ can be decomposed into two $1-$factors. So $C_m[2]$ has a $1-$factorization. If $F$ is a $1-$factor in $C_m[2]$, $F[2]$ is a $C_4-$factor in $C_m[4]$ since $K_2[2]\cong C_4$. Hence $C_m[4]$ has a $C_4-$factorization.
\end{proof}

\begin{lem} \label{lemma Cm-factorization}
For every integer $m\geq 3$, $C_m[4]$ has a $C_m-$factorization.
\end{lem}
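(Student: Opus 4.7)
The plan is to view $C_m[4]$ with vertex set $\mathbb{Z}_m\times V$, where $V=\{e,a,b,c\}$ is the Klein four-group $\mathbb{Z}_2\times\mathbb{Z}_2$. Between consecutive parts $i$ and $i+1$ the induced subgraph is a $K_{4,4}$, and for each $w\in V$ the map $x\mapsto x+w$ gives a perfect matching $M_i^{(w)}$; as $w$ ranges over $V$ these four matchings form a $1$-factorization of that $K_{4,4}$. I will construct four $2$-factors $F_e,F_a,F_b,F_c$ of $C_m[4]$ by choosing, at each edge $i$ of the underlying $C_m$, a single matching $M_i^{(f_v(i))}$ for some function $f_v:\mathbb{Z}_m\to V$. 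Two conditions will then be imposed: (A) for every $i$, $\{f_v(i):v\in V\}=V$, which guarantees edge-disjointness and full coverage of $C_m[4]$; and (B) $\sum_{i=0}^{m-1}f_v(i)=0$ in $V$ for each $v$, which (by tracing the orbit of $(0,x)$ once around the base $C_m$ back to $(0,x+\sum_i f_v(i))$) forces each $F_v$ to split into four disjoint $m$-cycles.

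The remaining step is to exhibit the $f_v$'s. For even $m$ the simple choice $f_v(i)=v$ for all $i$ works: (A) is immediate and $\sum_i f_v(i)=mv=0$ since $V$ has exponent $2$. For odd $m$ the same choice fails (the sum becomes $v$, which is nonzero for $v\neq e$), so I will install an explicit three-edge correction modelled on the $m=3$ case,
\[
f_e=(e,e,e),\quad f_a=(a,b,c),\quad f_b=(b,c,a),\quad f_c=(c,a,b),
\]
which satisfies (A) at each of its three columns and whose every row sums to $0$, and then set $f_v(i)=v$ for $i\geq 3$. Since $m-3$ is even the tail contributes $(m-3)v=0$, so the total still vanishes, and (A) holds trivially on the tail columns.

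I expect the main obstacle to be precisely the parity issue for odd $m$. Using the cyclic group $\mathbb{Z}_4$ in place of $V$ is the naive attempt but is fatal: summing (B) over all four rows gives $\sum_v\sum_i f_v(i)\equiv m\cdot(0+1+2+3)\equiv 2m\pmod 4$, which is nonzero for odd $m$, so (B) cannot hold for every row simultaneously. Working in $V\cong\mathbb{Z}_2^2$ sidesteps this obstruction because every element is its own inverse, reducing (B) to parity conditions that are compatible with odd $m$ once the three-edge patch absorbs the single unavoidable odd contribution.
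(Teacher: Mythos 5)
Your construction is correct: the matchings $M_i^{(w)}$ do $1$-factorize each consecutive $K_{4,4}$, condition (A) makes the four factors edge-disjoint and exhaustive, condition (B) (together with the fact that the part-index advances by one at every step, so a component can only close after a multiple of $m$ steps) forces each factor to split into four $m$-cycles, and your explicit arrays satisfy both conditions — the three-column patch has Latin columns and rows summing to $a+b+c=0$, and the even-length tail contributes nothing since $V$ has exponent $2$. However, your route is genuinely different from the paper's. The paper also works over the Klein group (as the additive group of $\mathbb{F}_4$), but it builds a single base $m$-cycle $C$ with $v_i=(x^i,i)$, forms one $2$-factor as $C\cup(x,1)\cdot C\cup(x^2,1)\cdot C\cup(0,1)\cdot C$ using the order-$3$ multiplicative automorphism, and then develops that factor by the translations $+(1,0),+(x,0),+(x^2,0)$; consequently each of the paper's factors uses exactly one edge from \emph{every} one of the four matchings between consecutive parts, and a special adjustment of $v_{m-1}$ is needed when $m\equiv 1\pmod 3$ to keep the closing edges in distinct difference classes. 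Your factors, by contrast, are unions of \emph{whole} matchings, which reduces the entire verification to exhibiting a $4\times m$ array over $\mathbb{Z}_2\times\mathbb{Z}_2$ with Latin columns and zero row sums — arguably a cleaner and more transparent certificate, and your side remark that the analogous array cannot exist over $\mathbb{Z}_4$ for odd $m$ (since the row sums would total $2m\not\equiv 0\pmod 4$) is a correct observation that foreshadows the paper's Lemma on the nonexistence of a $\{C_4^1,C_m^3\}$-factorization. What the paper's heavier construction buys is the explicit $AGL(1,4)$ symmetry it records after the proof; what yours buys is brevity and an easily checked combinatorial condition.
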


\begin{proof}
We can represent $C_m[4]$ as the Cayley graph over $V_4\mathbb{\times Z}_m$ with connection set $V_4\times \{1,-1\}$ where $V_4$ is the additive group of $\mathbb{F}_4=\{0,1,x,x^2\}$. Let $C=(v_0,v_1, \dots ,v_{m-1})$ be an $m-$cycle of $C_m[4]$ where $v_i=(x^i,i)$ for $0\leq i \leq m-1$. In the case of  $m\equiv1 \pmod{3}$ replace $v_{m-1}$ with $(x,m-1)$. It can be checked that
\begin{center}
$F= C \cup (x,1) \cdot C \cup (x^2,1)\cdot C \cup (0,1) \cdot C$
\end{center}
is a $2-$ factor of $C_m[4]$. It is also easy to check that
\begin{center}
$\mathcal{F}=\{ F, F+(1,0), F+(x,0), F+(x^2,0)\}$
\end{center}
is a $2-$ factorization of $C_m[4]$.
\end{proof}

It is evident that the addition by $(1,0)$ and multiplication by $(x,1)$ are automorphisms of the above factorization $\mathcal{F}$. These automorphisms clearly generate $AGL(1,4)$ (the $1-$dimensional affine general linear group over $\mathbb{F}_4$).

\begin{lem} \label{lemma C4^2,Cm^2-factorization}
For every integer $m\geq 3$, $C_m[4]$ has a $\{C_4^2,C_m^2\}-$factorization.
\end{lem}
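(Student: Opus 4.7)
My plan is to exhibit the $\{C_4^2, C_m^2\}$-factorization of $C_m[4]$ explicitly, using the Cayley graph representation $\mathrm{Cay}(V_4 \times \mathbb{Z}_m, V_4 \times \{1, -1\})$ introduced in Lemma~\ref{lemma Cm-factorization}. Think of $C_m[4]$ as a cycle of $m$ copies of $K_{4,4}$, one between each pair of consecutive layers; any $2$-factor is then determined by choosing one perfect matching in each of these $K_{4,4}$'s, and the cycle structure of the factor is controlled by the composition around the cycle of the associated permutations of the four copies within each layer.

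I would first construct the two $C_m$-factors by selecting matchings so that (a) the two matchings at each layer pair $(i, i+1)$ are disjoint, and (b) in each factor the matchings compose to the identity on the four copies over the $m$ layers (so the factor breaks into four $m$-cycles). Using the shifts of $V_4$ as a $1$-factorization of each $K_{4,4}$, a natural choice is the identity shift at every layer for one factor, together with shifts $s_i \in V_4 \setminus \{0\}$ with $\sum_i s_i = 0$ in $V_4$ for the second; such sequences $(s_i)_{i=0}^{m-1}$ exist for every $m \geq 3$, and the resulting two factors are edge-disjoint $C_m$-factors.

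Next I would decompose the residual 4-regular subgraph $R$ (formed by the two unchosen matchings at each layer pair) into two $C_4$-factors. Inside each $K_{4,4}$, the union of the two residual matchings is either two vertex-disjoint $4$-cycles (the ``good'' case) or a single $8$-cycle, depending on the shift difference. Choosing $(s_i)$ to maximise the good case, I would bundle the local $C_4$'s into two global $C_4$-factors by an alternating assignment around the $m$-cycle of layers, in the manner of a $2$-edge-colouring.

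The main obstacle I anticipate is that for odd $m$ this alternating assignment cannot close up consistently around $C_m$, so at least one layer pair must remain a ``defect'' whose residual is an $8$-cycle rather than $2C_4$. Absorbing such defects requires building $4$-cycles that span three consecutive layers, stitching edges from the defective $K_{4,4}$ together with edges from adjacent well-behaved ones in such a way that both global $C_4$-factors remain $2$-regular at every vertex. Verifying that this stitching is both consistent and exhaustive is the most technical step, and will likely require a case split according to $m \bmod 4$ or $m \bmod 3$ (paralleling the residue split in Lemma~\ref{lemma Cm-factorization}).
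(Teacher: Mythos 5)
Your framework (Cayley graph over $V_4\times\mathbb{Z}_m$, two $C_m$-factors from shift sequences with $\sum_i s_i=0$, then a $C_4$-factorization of the residual) is sound, and for even $m$ it goes through cleanly: taking $s_i=s\neq 0$ constant, the two residual matchings at each layer pair are the shifts $t,u$ with $\{0,s,t,u\}=V_4$, their union is always $2C_4$ (every nonzero element of $V_4$ has order $2$, so no $8$-cycles arise), and the alternating coset assignment closes up. But the proof is genuinely incomplete exactly where the paper needs it, namely for odd $m$. There the shifts $s_i$ cannot all be equal (a constant nonzero $s$ would give $\sum s_i=ms\neq 0$), and the obstruction you meet is not merely a global parity failure around the cycle: the two $4$-cycles of layer pair $(i,i+1)$ cover the two cosets of $H_i=\{0,s_i\}$ in layer $i$, so at any layer where $s_{i-1}\neq s_i$ a coset of $H_{i-1}$ and a coset of $H_i$ always meet in exactly one vertex and can never be complementary. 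Hence whole-layer-pair $4$-cycles cannot be partitioned into two $C_4$-factors at all, and your ``stitching'' of $4$-cycles spanning three layers is not an optional refinement but the entire content of the lemma in the odd case. You describe it only as a plan (``will likely require a case split''), with no construction and no verification that the stitched factors are $2$-regular and exhaust the residual edges, so the statement is not proved for odd $m$.

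By contrast, the paper sidesteps the layer-by-layer analysis entirely: it writes down explicit base cycles in $\mathrm{Cay}(\mathbb{Z}_4\times\mathbb{Z}_m,\mathbb{Z}_4\times\{1,-1\})$ --- two base $m$-cycles whose $(j,0)$-translates give the two $C_m$-factors, one base $4$-cycle $C_*$ whose $(0,i)$-translates give most of a $C_4$-factor, and, for odd $m$, one extra $4$-cycle $C_*'$ (together with $C_*'+(2,0)$) that plays precisely the role of your defect-absorbing gadget at the wrap-around --- and then checks directly that the four factors partition the edge set. To repair your argument you would need to supply the analogous explicit defect $4$-cycles and verify the partition, at which point you have essentially reproduced the paper's proof; as it stands, the key step is missing.
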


\begin{proof}
We can represent $C_m[4]$ as the Cayley graph $\Gamma$ over $\mathbb{Z}_4\mathbb{\times Z}_m$ with connection set $\mathbb{Z}_4\times \{1,-1\}$.

When $m$ is even, let $C=(v_0,v_1, \dots ,v_{m-1})$ and $C'=(v'_0,v'_1, \dots ,v'_{m-1})$ be the $m-$cycles of $\Gamma$ where $v_i=(2i,i)$ and $v'_i=(0,i)$  for $0\leq i \leq m-1$. Then
\begin{center}
$F_1= C \cup (C+(1,0)) \cup (C+(2,0))\cup (C+(3,0))$  and\\
$F'_1= C' \cup (C'+(1,0)) \cup (C'+(2,0))\cup (C'+(3,0))$
\end{center}
are two edge-disjoint $m$-cycle factors of $\Gamma$.

Also let $C_*=((0,1),(1,0),(2,1),(3,0))$ be a $4-$cycle of $\Gamma$. Then
\begin{center}
$F_2= \bigcup\limits_{i=0}^{m-1}(C_*+(0,i))$
\end{center}
is a $4$-cycle factor of $\Gamma$. It can be checked that
\begin{center}
$\mathcal{F}=\{ F_1, F'_1, F_2, F_2+(1,0)\}$
\end{center}
is a $2-$ factorization of $\Gamma$.

When $m$ is odd, let $C$, $C'$ and $C_*$ be defined as above with $v_{m-1}=(1,m-1)$. Also let $C_{*}^{'}=((0,0),(2,m-1),(1,m-2),(3,m-1))$ be a $4-$cycle of $\Gamma$. Then
\begin{center}
$F_1= C \cup (C+(1,0)) \cup (C+(2,0))\cup (C+(3,0))$ \\
$F'_1= C' \cup (C'+(1,0)) \cup (C'+(2,0))\cup (C'+(3,0))$ and \\
$F_2= \bigcup\limits_{i=0}^{m-3}(C_*+(0,i)) \cup C_{*}^{'} \cup (C_{*}^{'}+(2,0))$
\end{center}
are $2-$factors of $\Gamma$. It can be checked that
\begin{center}
$\mathcal{F}=\{ F_1, F'_1, F_2, F_2+(1,0)\}$
\end{center}
is a $2-$ factorization of $\Gamma$.
\end{proof}

\begin{lem} \label{lemma C4^1,Cm^3-factorization}
For any odd integer $m\geq 3$, $C_m[4]$ has no  $\{C_4^1,C_m^3\}-$factorization; that is, $C_m[4] \not\cong mC_4\oplus 4C_m\oplus 4C_m\oplus 4C_m$.
\end{lem}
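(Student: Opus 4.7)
The plan is to exploit the bipartite-matching structure that odd $m$ forces on any $C_m$-factor. Write $V_0,V_1,\dots,V_{m-1}$ for the four-element parts of $C_m[4]$ corresponding to the vertices of $C_m$, so that every edge of $C_m[4]$ lies in one of the complete bipartite graphs $K_{4,4}(V_i,V_{i+1})$ (indices modulo $m$). The first step is the structural observation that for odd $m$, every $m$-cycle of $C_m[4]$ traverses the parts cyclically, visiting each exactly once: assigning to each edge the signed step $\pm 1$ according to the index change, the $m$ signed steps of a closed walk sum to some integer $S$ with $S\equiv 0\bmod m$, $|S|\le m$, and the same parity as $m$ (odd); hence $S=\pm m$, i.e., all $+1$ or all $-1$. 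Consequently, the four cycles of any $C_m$-factor together induce a perfect matching inside every $K_{4,4}(V_i,V_{i+1})$.

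Assuming for contradiction that a $\{C_4^1,C_m^3\}$-factorization of $C_m[4]$ exists, the three $C_m$-factors contribute three edge-disjoint perfect matchings in $K_{4,4}(V_i,V_{i+1})$, accounting for twelve of its sixteen edges. The $C_4$-factor takes up the remaining four edges, and since the complement of a $3$-regular subgraph of $K_{4,4}$ is $1$-regular, the $C_4$-factor also restricts to a perfect matching in each $K_{4,4}(V_i,V_{i+1})$. Applying the same signed-step argument to four terms, every $4$-cycle of $C_m[4]$ has step-sum $0$ (two $+1$'s and two $-1$'s), and the six such patterns yield exactly two shapes: \emph{Type A}, with two vertices in each of some consecutive pair $V_i,V_{i+1}$; and \emph{Type B at $V_i$}, with two vertices in $V_i$ and one in each of $V_{i-1}$ and $V_{i+1}$.

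Fixing $v\in V_i$, I would then compute the degree of $v$ in the $C_4$-factor restricted to $K_{4,4}(V_i,V_{i+1})$ by checking each of the five possibilities for its unique $4$-cycle: Type A at $(V_{i-1},V_i)$ gives $0$, Type A at $(V_i,V_{i+1})$ gives $2$, Type B at $V_{i-1}$ gives $0$, Type B at $V_i$ gives $1$, and Type B at $V_{i+1}$ gives $2$. To realise the required degree $1$, each of the four vertices of $V_i$ must lie in a Type B $4$-cycle centred at $V_i$; since each such cycle uses two vertices of $V_i$, at least two Type B cycles are centred at every $V_i$, giving at least $2m$ Type B cycles in total. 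This contradicts the fact that the $C_4$-factor contains exactly $m$ cycles (covering $4m$ vertices with vertex-disjoint $4$-cycles). The main obstacle will be the short degree enumeration in step three, but the rest is a one-line count once the perfect-matching observation is in hand.
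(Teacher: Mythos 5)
Your proof is correct and follows essentially the same route as the paper: the key observation in both is that for odd $m$ every $m$-cycle of $C_m[4]$ visits each part exactly once, so the three $C_m$-factors leave behind a $2$-regular graph in which every vertex has exactly one neighbour in $V_{i-1}$ and one in $V_{i+1}$. The paper finishes in one line (such a graph's cycles wind monotonically around the levels, so their lengths are multiples of $m$, never $4$), whereas you reach the same contradiction via a classification of $4$-cycle shapes and a counting argument; both are valid.
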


\begin{proof}
Consider $C_m[4]$ as the Cayley graph $\Gamma$ over $\mathbb{Z}_4\mathbb{\times Z}_m$ with connection set $\mathbb{Z}_4\times \{1,-1\}$ as before.

We prove the Lemma by contradiction. So assume that $\Gamma$ can be decomposed into three $C_m-$factors and a single $C_4-$factor.

Since {\em m} is odd, each $m-$cycle in $\Gamma$ contains one and only one vertex $(a,i)$ of $\Gamma$ for each $i \in \mathbb{Z}_m $. When we remove the three $C_m-$factors, we are left with a $2-$regular graph where each vertex $(a,i)$ is adjacent to only one vertex  $(b,i-1)$ and  only one vertex $(c,i+1)$ for some $b,c \in \mathbb{Z}_4$. So,  this  $2-$regular graph can not contain any $4-$cycles.

Hence, $\Gamma$ has no $\{C_4^1,C_m^3\}-$factorization.
\end{proof}

\section{When {\em r} is odd}
Now, we can prove that for odd $m\geq 3$, a solution to $(4, m)-$HWP$(v; r, s)$ exists for all odd {\em r} (or even {\em s}) satisfying the necessary conditions.

\begin{thm} \label{thm:oddr}
For all positive odd integers r and $m\geq 3$, a solution to \\ $(4, m)-${\em HWP}$(v; r, s)$ exists if and only if $4|v$, $m|v$ and $r+s=\frac{v-2}{2}$ except possibly  $v=24,48$ when $m= 3$.
\end{thm}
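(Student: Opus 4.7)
My plan is to build the required $2$-factorization of $K_v-I$ (with $v=4mt$) by cutting along decompositions (\ref{three}) and (\ref{four}) and invoking suitable $2$-factorizations of each $C_m[4]$ block supplied by Lemmas \ref{lemma C4-factorization}, \ref{lemma Cm-factorization}, and \ref{lemma C4^2,Cm^2-factorization}. Necessity is Lemma \ref{lemma:necessary}; since $r+s=2mt-1$ is odd and $r$ is odd, $s$ is automatically even, so only existence remains. I would first dispose of the boundary case $r=1$ uniformly in $t$ via the coarser identity (\ref{one}): Theorem \ref{liu} with $a=4$, $b=mt$, $l=m$ triggers no exception (since $a=4\notin\{2,6\}$ and $b=mt\ge 3$) and so yields a $C_m$-factorization of $K_{mt}[4]$ into $2(mt-1)$ factors, while $mtK_4=mtC_4\oplus I$ supplies the missing $C_4$-factor plus the $1$-factor $I$.

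For $r\ge 3$ and $t$ odd I would use (\ref{three}), where $mtK_4$ already contributes one $C_4$-factor and the $(mt-1)/2$ blocks $tC_m[4]$ can each be factored into four $C_4$-factors, four $C_m$-factors, or two of each, by the three constructive lemmas above. Writing $(a,b,c)$ for the number of blocks of each type, with $a+b+c=(mt-1)/2$, the resulting $C_4$-count is $r=1+4a+2b$. Solving $2a+b=(r-1)/2$ subject to $a+b\le(mt-1)/2$ over nonnegative integers is elementary: take $(a,b)=(0,(r-1)/2)$ when $r\le mt$, and $(a,b)=\bigl((r-1)/2-(mt-1)/2,\,mt-r\bigr)$ when $r>mt$, so every odd $r$ with $1\le r\le 2mt-1$ is realized.

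For $r\ge 3$ and $t$ even with $(m,t)\notin\{(3,2),(3,4)\}$ I would use (\ref{four}). The new wrinkle is the term $(mt/2)K_{4,4}$: because $K_{4,4}$ is bipartite and $m$ is odd, no $2$-factor of $K_{4,4}$ can be a $C_m$-factor, so I decompose each $K_{4,4}$ as $2C_4\oplus 2C_4$. Combined with the $C_4$-factor from $mtK_4$ this forces three $C_4$-factors before any block is touched, and the same block analysis now gives $r=3+4a+2b$ with $a+b+c=(mt-2)/2$, hitting every odd $r\in\{3,5,\ldots,2mt-1\}$ by the identical arithmetic. The residual cases $(m,t)\in\{(3,2),(3,4)\}$, where (\ref{four}) itself fails, become the stated possible exceptions $v\in\{24,48\}$ when $m=3$.

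The step I expect to be most delicate is the $r=1$ case when $t$ is even: decomposition (\ref{four}) is saturated with three forced $C_4$-factors, and Lemma \ref{lemma C4^1,Cm^3-factorization} rules out the natural remedy of using a $\{C_4^1,C_m^3\}$-factored block to ``cancel'' two of them. Reverting to the coarser (\ref{one}) and applying Liu's theorem directly to $K_{mt}[4]$ is what side-steps this obstruction and is precisely why $r=1$ must be handled separately from the bulk of the argument.
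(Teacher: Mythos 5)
Your proposal follows the paper's own proof almost exactly: the same splits (\ref{three}) and (\ref{four}) into $C_m[4]$ blocks, the same three block factorizations from Lemmas \ref{lemma C4-factorization}, \ref{lemma Cm-factorization} and \ref{lemma C4^2,Cm^2-factorization}, the same two forced $C_4$-factors from $\frac{mt}{2}K_{4,4}$ plus one from $mtK_4$, and the same appeal to Liu's theorem via (\ref{two}) for $r=1$ (the paper needs this only when $t$ is even, folding the odd-$t$, $r=1$ case into the general count, but your uniform treatment is equally valid). The one flaw is arithmetic: in the odd-$t$ branch with $r>mt$ your witness $(a,b)=((r-mt)/2,\,mt-r)$ has $b<0$ and gives $2a+b=0$ rather than $(r-1)/2$; the correct choice is $a=(r-mt)/2$, $b=(2mt-1-r)/2=s/2$, $c=0$, which satisfies $2a+b=(r-1)/2$ and $a+b=(mt-1)/2$, so the feasibility claim --- and hence the proof --- stands.
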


\begin{proof}
If a solution to $(4, m)-$HWP$(v; r, s)$ exists, then by Lemma \ref{lemma:necessary},  $m|v$, $4|v$ and $r+s=\frac{v-2}{2}$ since {\em v} is even.

For the sufficiency part, assume  $m\geq 3$ is odd, $m|v$ and $4|v$. Then, since $\gcd(4,m)=1$, $4m|v$. Thus, there exists  an integer {\em t} such that $v=4mt$.

We will prove the theorem in two cases; $t$ is odd or even.

\noindent \textbf{Case 1:} Assume $t$ is odd.

By (\ref{three}), $K_{4mt}-I$ has a  $\{(C_m[4])^{^{(mt-1)/2}},C_4\}-$factorization. Now, let $r_1,s_1$ and $x$ be non-negative integers with $r_1+s_1+x=\frac{mt-1}{2}$. Placing a $C_4-$ factorization on $r_1$ of the $C_m[4]-$factors by Lemma \ref{lemma C4-factorization}, a $C_m-$factorization on $s_1$ of the $C_m[4]-$factors by Lemma \ref{lemma Cm-factorization} and a $\{C_4^2,C_m^2\}-$ factorization on  the remaining $x$ $C_m[4]-$factors by Lemma \ref{lemma C4^2,Cm^2-factorization} gives us a $\{C_4^{4r_1+2x+1},C_m^{4s_1+2x}\}-$ factorization of the $K_{4mt}-I$. That is, a solution to $(4, m)-$HWP$(4mt; r, s)$  exists for $r=4r_1+2x+1$ (any positive odd integer can be written in this form for non-negative $r_1$ and $x$) and $s=4s_1+2x$. It is not difficult to see that $1\leq r$ is odd and $0\leq s$ is even with $r+s=4r_1+2x+1+4s_1+2x=2mt-1=\frac{v-2}{2}$.

Therefore, a solution to $(4, m)-$HWP$(4mt; r, s)$ exists for all odd integers {\em r} and {\em t} with $r+s=2mt-1$.

\noindent \textbf{Case 2:} Now assume $t$ is even, except $t\neq 2,4$ when $m=3$.

By (\ref{four}), $K_{4mt}-I$ has a  $\{(C_m[4])^{^{(mt-2)/2}},C_4^3\}-$factorization.

Similarly, placing a $C_4-$factorization on $r_1$ of the $C_m[4]-$factors by Lemma \ref{lemma C4-factorization}, a $C_m-$ factorization on $s_1$ of the $C_m[4]-$factors by Lemma \ref{lemma Cm-factorization} and a $\{C_4^2,C_m^2\}-$ factorization on  the remaining $x$ $C_m[4]-$factors by Lemma \ref{lemma C4^2,Cm^2-factorization} gives us a\\
$\{C_4^{4r_1+2x+3},C_m^{4s_1+2x}\}-$factorization of the $K_{4mt}-I$.

Since any odd integer $r\geq 3$ can be written as $r=4r_1+2x+3$ for non-negative integers $r_1$ and $x$, we obtain that for even {\em t}, a solution to $(4, m)-$HWP$(4mt; r, s)$ exists for all odd integers $3\leq r$ (or even $0\leq s$) with $r+s=\frac{v-2}{2}$ and $m\geq 3$, except $t\neq 2,4$ and $m=3$.

For $r=1$, by the equivalence (\ref{two}) and $K_{mt}[4]\cong K_{4:mt}$,  we can write  $K_{4mt}-I\cong K_{4:mt}\oplus mtC_4$. From \cite{liu},  $K_{4:mt}$ has a $C_m-$factorization. So, placing a $C_m$-factorization on the $K_{4:mt}-$factor yields a $\{C_4^1,C_m^s\}-$factorization of $K_{4mt}-I$ with $s=2mt-2$.
\end{proof}

\section{When {\em r} is even}

Since $C_m[4]$ has no $\{C_4^1,C_m^3 \}-$factorization, we can not obtain a solution to $(4, m)-$ HWP $(4mt; r, s)$ for even {\em r} (or equivalently odd {\em s}) using the construction in the proof of Theorem \ref{thm:oddr}. However, we will use a similar construction via switching the edges of a $1-$factor from $K_4$'s with some edges of $C_m[4]$ in (\ref{three}) and (\ref{four}), then we will get a $\{C_4^2,C_m^3 \}-$factorization of the new graph.  In short, if we let $C_m^{^*}[4]\cong C_m[4]\oplus mK_4$ and $I$ is a $1-$factor  of $C_m[4]$, then we will show that
\begin{center}
$C_m^{^*}[4]-I \cong mC_4\oplus mC_4\oplus 4C_m\oplus 4C_m\oplus 4C_m$,
\end{center}
that is, $C_m^{^*}[4]-I$ has a $\{C_4^2,C_m^3\}-$factorization.

\begin{lem} \label{lemma:switch}
$(C_m[4]-I)\oplus mK_4$ has a $\{C_4^2,C_m^3\}-$factorization for some $1-$ factor $I$ in $C_m[4]$ where each $K_4$ consists of four copies of the vertex $v_i$ for any $v_i \in C_m $.
\end{lem}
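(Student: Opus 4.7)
The plan is to give an explicit construction, modelled on the Cayley-graph constructions used in Lemmas~\ref{lemma Cm-factorization} and~\ref{lemma C4^2,Cm^2-factorization}. View $C_m[4]$ as the Cayley graph on $\mathbb{Z}_4\times\mathbb{Z}_m$ with connection set $\mathbb{Z}_4\times\{1,-1\}$, so that the $m$ intra-fiber $K_4$'s sit on the fibers $V_i=\mathbb{Z}_4\times\{i\}$ for $i\in\mathbb{Z}_m$. Fix the shift-invariant 1-factor
\[
I=\{\{(0,i),(1,i+1)\},\{(2,i),(3,i+1)\}:i\in\mathbb{Z}_m\},
\]
so that both $C_m[4]-I$ and $mK_4$ are invariant under the translation $(a,i)\mapsto(a,i+1)$.

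The $\{C_4^2,C_m^3\}$-factorization will be built in two stages. First, construct three edge-disjoint $C_m$-factors $H_1,H_2,H_3$ inside $C_m[4]-I$ whose union together with $I$ uses $14$ of the $16$ inter-fiber edges between each adjacent pair $V_i,V_{i+1}$, leaving exactly two inter-fiber edges per pair that form a matching between $V_i$ and $V_{i+1}$. Second, combine this leftover inter-fiber matching with the intra-fiber $K_4$'s: for each $i$, if the leftover edges between $V_i$ and $V_{i+1}$ are $\{(a,i),(c,i+1)\}$ and $\{(b,i),(d,i+1)\}$, then adjoining the intra-fiber edges $\{(a,i),(b,i)\}$ and $\{(c,i+1),(d,i+1)\}$ yields the $4$-cycle $((a,i),(b,i),(d,i+1),(c,i+1))$. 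The disjoint union of these $m$ four-cycles forms the first $C_4$-factor $G_1$. Because exactly two vertices of $V_i$ have their residue edge going to $V_{i-1}$ and the other two to $V_{i+1}$, the two intra-fiber edges contributed to $G_1$ inside $V_i$ are disjoint and hence form a perfect matching of the $K_4$ on $V_i$. The complementary four edges of this $K_4$ form a $4$-cycle, and the union over $i\in\mathbb{Z}_m$ of these intra-fiber $4$-cycles is the second $C_4$-factor $G_2$. This explains why a 1-factor can be switched from each $K_4$ with a suitable 1-factor of $C_m[4]$, as announced at the start of Section~4.

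The main obstacle is the first stage: choosing three $C_m$-factors of $C_m[4]-I$ whose union with $I$ has exactly the prescribed matching as residue. For odd $m$ a fully shift-invariant family of $m$-cycles may fail to close up (the composition of the inter-fiber bijections around $\mathbb{Z}_m$ must equal the identity on each fiber $V_i$, and for odd $m$ this forces either trivial shifts or a boundary correction), so one typically combines shift-invariant base cycles on the middle fiber transitions $V_0V_1,\ldots,V_{m-3}V_{m-2}$ with a handful of specially designed cycles at the transitions $V_{m-2}V_{m-1}$ and $V_{m-1}V_0$, analogous to the corrective cycle $C'_*$ in the proof of Lemma~\ref{lemma C4^2,Cm^2-factorization}. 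Once such cycles are produced, the verification reduces to checking that every edge of $(C_m[4]-I)\oplus mK_4$ lies in exactly one of $H_1,H_2,H_3,G_1,G_2$ and that each factor is $2$-regular with components of the prescribed length, both of which follow by a direct count.
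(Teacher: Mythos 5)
Your overall architecture coincides with the paper's: realize $C_m[4]$ as the Cayley graph on $\mathbb{Z}_4\times\mathbb{Z}_m$ with connection set $\mathbb{Z}_4\times\{1,-1\}$, take three $C_m$-factors, one ``mixed'' $C_4$-factor built from a perfect matching of each intra-fiber $K_4$ together with the two leftover inter-fiber edges per transition, and one $C_4$-factor consisting of the complementary $4$-cycle inside each $K_4$. The paper's $F_4$ and $F_5$, generated by the base cycles $C_{(4)}=((1,0),(2,0),(0,1),(3,1))$ and $C_{(5)}=((0,0),(1,0),(3,0),(2,0))$ under the translations $(0,i)$, are exactly your $G_1$ and $G_2$, and your degree count showing that the residue of three $C_m$-factors plus $I$ is a one-regular inter-fiber graph splitting as two edges per transition is correct.

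However, there is a genuine gap: you never produce the three edge-disjoint $C_m$-factors, and you yourself flag this as ``the main obstacle.'' That step is the entire content of the lemma --- the rest is routine bookkeeping --- and it cannot be left as a plausibility argument. Lemma~\ref{lemma C4^1,Cm^3-factorization} shows that three edge-disjoint $C_m$-factors of $C_m[4]$ can \emph{never} leave a $C_4$-factor as residue, so the existence of three $C_m$-factors whose residue has the very particular form you need (a prescribed $1$-factor plus a two-edge matching per transition whose endpoints mesh with the intra-fiber matchings) is delicate and must be exhibited, not assumed. The paper does this explicitly: it takes base cycles $C_{(1)},C_{(2)},C_{(3)}$ through the vertices $(0,i)$, $(i^2,i)$, $(-i^2,i)$ for $0\le i\le m-2$, with last vertices adjusted to $(3,m-1)$, $(1,m-1)$, $(0,m-1)$ respectively, and translates each by $(j,0)$ for $j\in\mathbb{Z}_4$; the removed $1$-factor is then determined by the construction to be $\{(0,i)(2,i+1)\}\cup\{(3,i)(1,i+1)\}$, $i\in\mathbb{Z}_m$, which is not the $I$ you fixed at the outset. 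Committing to a specific shift-invariant $I$ before knowing which residue your (unconstructed) $C_m$-factors leave is backwards; until you supply concrete cycles and verify edge-disjointness and the residue, the proof is incomplete.
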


\begin{proof} Consider $C_m[4]$ as the Cayley graph $\Gamma$ over $\mathbb{Z}_4\mathbb{\times Z}_m$ with connection set $\mathbb{Z}_4\times \{1,-1\}$, so each $K_4$ consists of the vertices $(0,i)$,$(1,i),(2,i),(3,i)$ for $i \in \mathbb{Z}_m$. And let $C_{(1)}=(u_0,u_1, \dots ,u_{m-1})$, $C_{(2)}=(v_0,v_1, \dots ,v_{m-1})$ and $C_{(3)}=(y_0,y_1, \dots ,y_{m-1})$ be $m-$cycles of $\Gamma$ defined by the vertices $u_i=(0,i)$, $v_i=(i^2,i)$ and $y_i=(-i^2,i)$ for $0 \leq i \leq m-2$ and $u_{m-1}=(3,m-1)$, $v_{m-1}=(1,m-1)$, $y_{m-1}=(0,m-1)$.
Then
\begin{center}
$F_1= C_{(1)} \cup (C_{(1)}+(1,0)) \cup (C_{(1)}+(2,0))\cup (C_{(1)}+(3,0))$\\
$F_2= C_{(2)} \cup (C_{(2)}+(1,0)) \cup (C_{(2)}+(2,0))\cup (C_{(2)}+(3,0))$\\
$F_3= C_{(3)} \cup (C_{(3)}+(1,0)) \cup (C_{(3)}+(2,0))\cup (C_{(3)}+(3,0))$\\
\end{center}
are $m$-cycle factors of $\Gamma$.

Let $C_{(4)}=((1,0),(2,0),(0,1),(3,1))$ and $C_{(5)}=((0,0),(1,0),(3,0),(2,0))$ be $4-$cycles of $\Gamma$. Then
\begin{center}
$F_4= \bigcup\limits_{i=0}^{m-1}(C_{(4)}+(0,i))$\\
$F_5= \bigcup\limits_{i=0}^{m-1}(C_{(5)}+(0,i))$
\end{center}
are $4$-cycle factors of $\Gamma$.
Then
\begin{center}
$\mathcal{F}=\{ F_1, F_2, F_3, F_4, F_5\}$
\end{center}
is a $2-$ factorization of $(\Gamma-I)\oplus mK_4$ where $I$ is a $1-$factor of $\Gamma$ with the edges $\{(0,i)(2,i+1)\}$ and $\{(3,i)(1,i+1)\}$ for each $ i \in \mathbb{Z}_m$.
\end{proof}

Now, we give solutions to the Hamilton-Waterloo Problem for some small cases and improve the results given in \cite{danziger}.

\begin{thm} \label{thm:v=24,m=3 exceptions}
For all positive integer $r$, a solution to $(4, 3)-${\em HWP}$(24; r, s)$ exists if and only if $r+s=11$ except possibly when $r=2$ and $r=6$.
\end{thm}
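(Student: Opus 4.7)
The necessity is immediate from Lemma~\ref{lemma:necessary}, so the task is to construct a $\{C_4^r, C_3^s\}$-factorization of $K_{24} - I$ for each $r \in \{1, 3, 4, 5, 7, 8, 9, 10, 11\}$. Two cases are immediate. The case $r = 11$ comes from Hoffman's solution to $OP(4^6)$. The case $r = 1$ is handled verbatim by the $r = 1$ subcase of the proof of Theorem~\ref{thm:oddr}: one writes $K_{24} - I \cong K_{4:6}\oplus 6C_4$ and invokes Liu's Theorem~\ref{liu}, which applies since $(4,6,3)$ is not among the listed exceptions, to obtain a $C_3$-factorization of $K_{4:6}$.

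For the remaining values, the base decomposition~(\ref{four}) is unavailable because $K_6$ admits no $\{C_3^2, K_2\}$-factorization (the non-existence of $OP(3^2)$). I would instead work from the substitute decomposition
\begin{equation*}
K_{24} - I \;\cong\; 2\,C_3[4] \,\oplus\, K_{12,12} \,\oplus\, 6\,C_4,
\end{equation*}
obtained from $K_6 = 2K_3 \oplus K_{3,3}$, lifted through the $[4]$ operation so that $K_6[4] = 2\,C_3[4] \oplus K_{12,12}$, with the $1$-factor $I$ chosen inside the $6K_4$ subgraph of~(\ref{one}). Liu's Theorem~\ref{liu} gives a $C_4$-factorization of $K_{12,12}$ into six factors, and $6C_4$ contributes a seventh $C_4$-factor. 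The two copies of $C_3[4]$ live on the two halves of $V(K_{24})$, and pairs of their $2$-factors combine into $2$-factors of $K_{24}$; by Lemmas~\ref{lemma C4-factorization}, \ref{lemma Cm-factorization}, and \ref{lemma C4^2,Cm^2-factorization}, the combined contribution from $2\,C_3[4]$ may be any of $(r_1, s_1) \in \{(4, 0), (2, 2), (0, 4)\}$, which already yields $r \in \{7, 9, 11\}$. To obtain the $C_3$-heavy mixed values, I would relocate part of $I$ so it consists of a $1$-factor inside each $C_3[4]$ and apply the switching Lemma~\ref{lemma:switch} on each half, producing a $\{C_4^2, C_3^3\}$-factorization of $(C_3[4] - I') \oplus 3K_4$ and hence $(r,s)=(8,3)$; permitting asymmetric choices on the two halves and mixing the switching lemma with Lemmas~\ref{lemma C4-factorization}-\ref{lemma C4^2,Cm^2-factorization} should cover $(r, s) = (10, 1)$ and the other intermediate values.

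The real obstacle is the small-$r$ cases $r \in \{3, 4, 5\}$: the decomposition above forces at least seven $C_4$-factors because $K_{12, 12}$ is bipartite and cannot host triangles, so it cannot reach $r < 7$. For these I would switch to the alternative decomposition
\begin{equation*}
K_{24} - I \;\cong\; K_{8,8,8} \,\oplus\, 3(K_8 - M),
\end{equation*}
coming from $K_{24} = K_3[8] \oplus 3K_8$ with $M$ a $1$-factor chosen inside each $K_8$. Because $3 \nmid 8$, each $K_8 - M$ must be $C_4$-factorized, contributing exactly three $C_4$-factors, and $K_{8,8,8}$ then furnishes the remaining eight $2$-factors. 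The technical heart of the argument becomes the construction of $\{C_4^a, C_3^b\}$-factorizations of $K_{8,8,8}$ for every $a \in \{0, 1, 2, 4, 5, 7, 8\}$, which I would attempt via explicit Cayley-graph constructions on $\mathbb{Z}_3 \times \mathbb{Z}_8$ in the spirit of Lemmas~\ref{lemma Cm-factorization} and~\ref{lemma C4^2,Cm^2-factorization}. The two excepted values $r = 2$ and $r = 6$ correspond respectively to $a = -1$ (combinatorially unreachable from this decomposition) and $a = 3$ (for which a parity obstruction in the spirit of Lemma~\ref{lemma C4^1,Cm^3-factorization} would be expected to rule out a $\{C_4^3, C_3^5\}$-factorization of $K_{8,8,8}$), which is consistent with their status as possible exceptions in the theorem.
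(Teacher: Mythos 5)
There is a genuine gap, and it sits exactly where the paper's own contribution lies. The paper's proof of this theorem is very short: it quotes Danziger, Quattrocchi and Stevens \cite{danziger}, who already settled $(4,3)$-HWP$(24;r,s)$ for every $r$ except $r=2,4,6$, and then exhibits one explicit $2$-factorization of $K_{24}-I$ with four $C_4$-factors and seven $C_3$-factors, which disposes of $r=4$ and leaves $r=2,6$ as the stated possible exceptions. You do not invoke \cite{danziger} at all and instead try to rebuild every admissible $r$ from recursive decompositions; the cases you genuinely establish are $r=1$, $r=11$, $r\in\{7,9\}$ (via $K_{24}-I\cong 2\,C_3[4]\oplus K_{12,12}\oplus 6C_4$) and $r=8$ (via the switching lemma applied on both halves). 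But the values you still owe include $r=4$, which is precisely the one case the theorem needs a new construction for.

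Concretely: your ``asymmetric choices'' step for $(r,s)=(10,1)$ and ``the other intermediate values'' does not go through as described, because if one half of the bipartition is switched (five $2$-factors: two $C_4$-type, three $C_3$-type) and the other is not (five $2$-factors, of which the one coming from $3K_4-I'$ is forced to be $C_4$-type), the two halves cannot be paired slot-by-slot with matching cycle types; reaching $(10,1)$ this way would require a $\{C_4^3,C_3^1\}$-factorization of $C_3[4]$, which neither you nor the paper provides (Lemma \ref{lemma C4^1,Cm^3-factorization} rules out the opposite split $\{C_4^1,C_3^3\}$ but says nothing about this one). More seriously, for $r\in\{3,4,5\}$ you only sketch a plan: the decomposition $K_{24}-I\cong K_{8,8,8}\oplus 3(K_8-M)$ is sound and correctly forces three $C_4$-factors from the $K_8-M$ pieces, but the required $\{C_4^a,C_3^{8-a}\}$-factorizations of $K_{8,8,8}$ for $a\in\{0,1,2\}$ are asserted as something you ``would attempt,'' not constructed. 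Liu's theorem gives only the pure cases $a=0$ and $a=8$; the mixed case $a=1$ needed for $r=4$ is exactly the hard content, and without it (or an explicit ad hoc factorization of $K_{24}-I$ such as the one the paper writes down) the theorem is not proved.
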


\begin{proof}
All the cases are covered by \cite{danziger} with possible exceptions when $r=2,4,6$. Let the vertex set of $K_{24}$ be $\mathbb{Z}_{24}$. Then, let\\
$F_1=(0,1,10,9)\cup (2,3,17,16)\cup (4,5,19,18)\cup (6,7,8,15)\cup (11,12,21,20)\cup (13,14,23,22),\\
F_2=(0,2,4,6)\cup (1,3,5,7)\cup (10,12,14,8)\cup (16,18,20,22)\cup (17,19,21,23)\cup (9,11,13,15),\\
F_3=(0,3,4,7)\cup (1,2,5,6)\cup (10,11,14,15)\cup (16,19,20,23)\cup (17,18,21,22)\cup (9,12,13,8),\\
F_4=(0,4,1,5)\cup (2,6,3,7)\cup (11,15,12,8)\cup (16,20,17,21)\cup (18,22,19,23)\cup (9,13,10,14),\\
F_5=(0,8,16)\cup (1,9,17)\cup (2,10,18)\cup (3,11,19)\cup (4,12,20)\cup (5,13,21)\cup (6,14,22)\cup (7,15,23),\\
F_6=(0,13,19)\cup (1,14,20)\cup (2,15,21)\cup (3,8,22)\cup (4,9,23)\cup (5,10,16)\cup (6,11,17)\cup (7,12,18),\\
F_7=(0,14,18)\cup (1,15,19)\cup (2,8,20)\cup (3,9,21)\cup (4,10,22)\cup (5,11,23)\cup (6,12,16)\cup (7,13,17),\\
F_8=(0,15,20)\cup (1,8,21)\cup (2,9,22)\cup (3,10,23)\cup (4,11,16)\cup (5,12,17)\cup (6,13,18)\cup (7,14,19),\\
F_9=(0,12,23)\cup (1,13,16)\cup (2,14,17)\cup (3,15,18)\cup (4,8,19)\cup (5,9,20)\cup (6,10,21)\cup (7,11,22),\\
F_{10}=(0,11,21)\cup (1,12,22)\cup (2,13,23)\cup (3,14,16)\cup (4,15,17)\cup (5,8,18)\cup (6,9,19)\cup (7,10,20),\\
F_{11}=(0,10,17)\cup (1,11,18)\cup (2,12,19)\cup (3,13,20)\cup (4,14,21)\cup (5,15,22)\cup (6,8,23)\cup (7,9,16)$.\\
It is easy to check that
\begin{center}
$\mathcal{F}=\{ F_1, F_2,  \dots  ,F_{11}\}$
\end{center}
is a $2-$ factorization of $K_{24}-I$ with four $C_4-$factors where
$I=\{(0,22),(1,23),$ $(2,11),(3,12),(4,13),(5,14),(6,20),(7,21),(8,17),(9,18),(10,19),(15,16)
\}$.

This completes the case $r=4$.

\end{proof}
\begin{thm} \label{thm:v=48,m=3 exceptions}
For all positive integers $r$, a solution to $(4, 3)-${\em HWP}$(48; r, s)$ exists if and only if $r+s=23$ with a possible exception when $r=6$.
\end{thm}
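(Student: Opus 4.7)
The proof will closely follow the template of Theorem \ref{thm:v=24,m=3 exceptions}. For $v = 48$, $m = 3$ we have $t = 4$, and since this triple $(m, t) = (3, 4)$ is precisely the case excluded in the decomposition (\ref{four}), the general construction used in Theorem \ref{thm:oddr} does not apply, and we cannot lean on the $C_m[4]$-factorization lemmas to dispatch the problem. Most instances of $(4,3)$-HWP$(48; r, 23-r)$ are already covered by Danziger et al.\ in \cite{danziger}, leaving a short list of small $r$ values (by analogy with Theorem \ref{thm:v=24,m=3 exceptions}, presumably $r \in \{2, 4, 6\}$) as the only possible exceptions. The task therefore reduces to constructing explicit $2$-factorizations of $K_{48} - I$ of the prescribed type for $r = 2$ and $r = 4$, while declaring $r = 6$ a possible exception.

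For each remaining $r$ I would exhibit a family $\mathcal{F} = \{F_1, \ldots, F_{23}\}$ of edge-disjoint $2$-factors of $K_{48} - I$ in which $r$ of the $F_i$ consist of twelve disjoint $4$-cycles and the remaining $23 - r$ consist of sixteen disjoint triangles. The natural tactic, paralleling the construction on $\mathbb{Z}_{24}$ in the previous theorem, is to take $V(K_{48}) = \mathbb{Z}_{48}$ and exploit a cyclic or near-cyclic automorphism: one picks one or two base $2$-factors and uses translates under a suitable subgroup of $\mathbb{Z}_{48}$ to generate most of the remaining factors, then hand-tailors a few extra factors to absorb the leftover edges. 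An alternative avenue, useful when $r$ is large, is a doubling argument: split the $48$ vertices into two $24$-sets $A, B$, use solutions to $(4,3)$-HWP$(24; r_1, s_1)$ on each of $K_A - I_A$ and $K_B - I_B$ from Theorem \ref{thm:v=24,m=3 exceptions}, and $C_4$-factorize $K_{A,B} \cong K_{24,24}$ into twelve $C_4$-factors; pairing up matching factors across $A$ and $B$ then produces a $(4,3)$-HWP$(48; r_1 + 12, s_1)$ solution, but this route cannot reach the smallest values of $r$.

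The main obstacle is producing the explicit factorizations for the two small $r$, since edge-disjointness of $23$ factors on $48$ vertices is stringent and is not implied by any general existence result available to us. I expect the case $r = 2$ to be the tightest, as only two $C_4$-factors are available to absorb every edge that is not part of a triangle in one of the $C_3$-factors. The same rigidity obstructs $r = 6$ and is precisely why it must be left as a possible exception in the statement.
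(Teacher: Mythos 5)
There is a genuine gap, and it starts at the very first step: you have misidentified which cases actually need to be handled. For $v=48$ the open cases left by Danziger et al.\ \cite{danziger} are $r=6,8,10,14,16,18$, not a short list of small $r$ ``by analogy'' with the $v=24$ case. Your plan therefore spends its effort constructing solutions for $r=2$ and $r=4$, which are already known, and says nothing about $r=8,10,14,16,18$, which are exactly the cases the theorem must supply. Moreover, no construction is actually exhibited --- the proposal is a description of what you \emph{would} do, and for $23$ pairwise edge-disjoint $2$-factors on $48$ vertices that is not something one can wave at. Your doubling idea also cannot rescue the missing cases: splitting $K_{48}-I$ as two copies of $K_{24}-I'$ plus $K_{24,24}$ forces $r=r_1+12\ge 13$, so it can never reach $r=8$ or $r=10$, and reaching $r=14$ or $r=18$ would require $r_1=2$ or $r_1=6$, which are precisely the unresolved cases of Theorem \ref{thm:v=24,m=3 exceptions}.

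The paper's actual route is quite different and worth internalizing. It takes the known $(4,3)$-HWP$(12;1,4)$ solution from \cite{adams} and blows it up by $4$ via $K_{48}\cong K_{12}[4]\oplus 12K_4$, obtaining a $\{(C_3[4])^4, C_4[4], K_{4,4}, K_4\}$-factorization of $K_{48}$. Then $C_4[4]$ yields four $C_4$-factors (Lemma \ref{lemma C4-factorization}), $K_{4,4}$ yields two more, and the key move --- Lemma \ref{lemma:switch} --- converts one $C_3[4]$ together with the leftover $K_4$'s into a $\{C_4^2,C_3^3\}$-factorization, sidestepping the nonexistence result of Lemma \ref{lemma C4^1,Cm^3-factorization}. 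Varying the factorizations of the remaining three $C_3[4]$'s then produces $r=4r_1+2x+8$, i.e.\ all even $r$ from $8$ to $20$, which covers every open case except $r=6$. If you want to salvage your write-up, you need to (i) quote the correct exception list from \cite{danziger}, and (ii) find a mechanism that generates solutions for even $r$ in the range $8$ to $18$; the blow-up-plus-switching construction is the natural one given the lemmas already established in the paper.
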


\begin{proof}
It is known that $(4, 3)-${\em HWP}$(48; r, s)$ has a solution with the possible exceptions when $r=6,8,10,14,$ $16,18$ \cite{danziger}. A solution to $(4, 3)-$ HWP $(12; 1, 4)$ has given in the Appendix of \cite{adams}, and by (\ref{one}), $K_{48}\cong K_{12}[4]\oplus12K_4$. Hence a $\{(C_3[4])^4, C_4[4],$ $K_{4,4},K_4\}-$ factorization of $K_{48}$ exists. Also by Lemma \ref{lemma C4-factorization}, $C_4[4]$ can be decomposed into four $C_4-$factors, by Lemma \ref{lemma:switch}, $(C_3[4]-I)\oplus 3K_4$ has a $\{C_4^2,C_3^3\}-$factorization, and it is easy to see that $K_{4,4}$ can be decomposed into two $C_4-$factors. So, we now have $8$ $C_4-$factors and $3$ $C_3-$factors already. For the remaining three $C_3[4]$'s, decompose $r_1$ of them into $C_4-$factors, $s_1$ of them into $C_3-$factors and $x$ of them into two $C_4-$factors and two $C_3-$factors where $r_1+s_1+x=3$ by Lemmas \ref{lemma C4-factorization}, \ref{lemma Cm-factorization} and \ref{lemma C4^2,Cm^2-factorization} respectively. Hence, we get   $r=4r_1+2x+8$ and $s=4s_1+2x+3$ and this gives us a $\{C_4^r,C_3^s\}-$factorization of $K_{48}-I$ for $r=8,10,12,14,16,18,20$.
\end{proof}

We would like to note that, we get the information on S. Bonvicini and M. Buratti gave solutions to all nine remaining cases of [6] independently, using clear algebraic methods in their soon to be sumbitted paper "Sharply Vertex Transitive 2-Factorizations of Cayley Graphs". But for the sake of completeness of our paper, we presented our results on six cases we have solved and left 3 cases as exceptions in our theorems.

\begin{thm} \label{thm:evenr}
For all positive even r and odd $m\geq 3$, a solution to \\$(4, m)-${\em HWP}$(v; r, s)$ exists if and only if $r+s=\frac{v-2}{2}$ except possibly $v=8m$ when $r=2$, and $v=24,48$ when $m=3$.
\end{thm}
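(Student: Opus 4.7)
My plan is to mirror the proof of Theorem~\ref{thm:oddr}, inserting one application of Lemma~\ref{lemma:switch} in order to flip the parity of the $C_4$-factor count coming from the $K_4$-piece of (\ref{three}) or (\ref{four}) from odd to even. Write $v=4mt$ and split on the parity of $t$.

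For $t$ odd, I start from (\ref{three}). One of the $tC_m[4]$-summands is a vertex-disjoint union of $t$ copies of $C_m[4]$ whose vertex sets together span all $mt$ vertex fibers, so I can pair it with all $mtK_4$'s and apply Lemma~\ref{lemma:switch} independently to each of the $t$ blocks $C_m[4]\oplus mK_4$. The resulting local 2-factorizations glue, after the removal of the induced 1-factor $I$ of $K_{4mt}$, into exactly two $C_4$-factors and three $C_m$-factors of $K_{4mt}$. On the remaining $(mt-3)/2$ $tC_m[4]$-summands I assign, exactly as in Theorem~\ref{thm:oddr}, $r_1$ copies of a $C_4$-factorization (Lemma~\ref{lemma C4-factorization}), $s_1$ copies of a $C_m$-factorization (Lemma~\ref{lemma Cm-factorization}) and $x$ copies of a $\{C_4^2,C_m^2\}$-factorization (Lemma~\ref{lemma C4^2,Cm^2-factorization}), with $r_1+s_1+x=(mt-3)/2$. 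This yields $r=4r_1+2x+2$ and $s=4s_1+2x+3$, realising every even $r\in\{2,4,\dots,2mt-4\}$; the lone remaining endpoint $r=2mt-2$, $s=1$ is furnished by~\cite{sibel}.

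The case $t$ even proceeds analogously, starting from (\ref{four}). The switch still contributes two $C_4$-factors and three $C_m$-factors, and the $(mt/2)K_{4,4}$-summand contributes two further $C_4$-factors via $K_{4,4}\cong 2C_4$. Distributing Lemmas~\ref{lemma C4-factorization}, \ref{lemma Cm-factorization}, \ref{lemma C4^2,Cm^2-factorization} over the remaining $(mt-4)/2$ $tC_m[4]$-summands gives $r=4r_1+2x+4$, $s=4s_1+2x+3$, and hence every even $r\in\{4,6,\dots,2mt-4\}$. The small cases in which (\ref{four}) itself fails, namely $v=24$ and $v=48$ with $m=3$, are cleared by Theorems~\ref{thm:v=24,m=3 exceptions} and~\ref{thm:v=48,m=3 exceptions}, and the endpoint $s=1$ is once again \cite{sibel}.

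The main obstacle is the value $r=2$ in the even-$t$ case, which the above construction misses because the $(mt/2)K_{4,4}$-block contributes two $C_4$-factors that cannot be traded for $C_m$-factors (each $K_{4,4}$ is bipartite while $m$ is odd). For $t=2$, i.e.\ $v=8m$, this obstruction is genuine within the present toolkit and is recorded in the statement as a possible exception. For $t\geq 4$ even, $r=2$ should be reached by abandoning (\ref{four}) in favour of the alternative $K_{4mt}-I\cong K_{4:mt}\oplus mtC_4$ from (\ref{two}): the $mtC_4$-summand already supplies one $C_4$-factor, so it suffices to produce a $\{C_4^1,C_m^{2mt-3}\}$-factorization of the complete equipartite graph $K_{4:mt}$, i.e.\ a refinement of Liu's Theorem~\ref{liu} in which exactly one of its $2(mt-1)$ factors is a $C_4$-factor and the remaining $2mt-3$ are $C_m$-factors. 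Establishing this mixed equipartite factorization — in particular circumventing the bipartite $K_{4,4}$-subblocks inside $K_{4:mt}$ — is the delicate technical step of the proof.
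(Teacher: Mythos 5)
Your Case 1 and the $r\geq 4$ part of Case 2 are essentially the paper's own argument: one application of Lemma~\ref{lemma:switch} to a spanning $tC_m[4]$-summand together with all the $K_4$'s, and Lemmas~\ref{lemma C4-factorization}, \ref{lemma Cm-factorization}, \ref{lemma C4^2,Cm^2-factorization} distributed over the rest, giving $r=4r_1+2x+2$ (resp.\ $4r_1+2x+4$). Your explicit appeal to \cite{sibel} for the endpoint $s=1$, $r=2mt-2$ is in fact more careful than the paper, whose construction only reaches $r\leq 2mt-4$ in Case 1 while asserting ``all even $r\geq 2$''; and your count $(mt-4)/2$ of remaining summands in Case 2 corrects an off-by-one in the paper's $r_1+s_1+x=\frac{mt-2}{2}$.

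The genuine gap is the case $r=2$ with $t$ even, $t\geq 4$. You reduce it, via (\ref{two}), to producing a $\{C_4^1,C_m^{2mt-3}\}$-factorization of $K_{4:mt}$ and then explicitly leave that ``delicate technical step'' unestablished. That step is the entire content of the case --- a mixed-cycle resolvable decomposition of a complete equipartite graph is not supplied by Theorem~\ref{liu} or by anything else in the paper, and there is no reason to expect it to be easier than the problem you started with. The paper avoids it altogether by choosing a different decomposition: partition the $4mt$ vertices into $t$ groups of size $4m$, so that $K_{4mt}-I\cong t(K_{4m}-I')\oplus K_{4m:t}$. Each $K_{4m}-I'$ receives a $\{C_4^2,C_m^{2m-3}\}$-factorization from the already-proved odd-$t$ case (with $t=1$), and these glue across the $t$ vertex-disjoint groups into two $C_4$-factors and $2m-3$ $C_m$-factors; the equipartite part $K_{4m:t}$ receives a \emph{pure} $C_m$-factorization directly from Theorem~\ref{liu}, which applies for every $t\neq 2$ (the condition ``$l$ even if $b=2$'' is exactly what excludes $t=2$, i.e.\ $v=8m$, and is why that value is the stated exception). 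You should replace your unproved equipartite step with this reduction; as written, your proof does not establish the theorem for $r=2$ and $v=4mt$ with $t\geq 4$ even.
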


\begin{proof}
We will consider two cases depending on the parity of {\em t}.

\noindent \textbf{Case 1:} Let {\em t} be odd.

By (\ref{three}), $K_{4mt}$ has a  $\{(C_m[4])^{^{(mt-1)/2}},K_4\}-$factorization.

Let $I$ be a $1-$factor of $K_{4m}$ as defined in Lemma \ref{lemma:switch} and, $r_1,s_1$ and $x$ be non-negative integers with $r_1+s_1+x=\frac{mt-3}{2}$. Placing a $C_4-$factorization on $r_1$ of the $C_m[4]$'s by Lemma \ref{lemma C4-factorization}, a $C_m-$factorization on $s_1$ of the $C_m[4]$'s by Lemma \ref{lemma Cm-factorization}, a $\{C_4^2,C_m^2\}-$factorization on  the $x$ of the  $C_m[4]$'s by Lemma \ref{lemma C4^2,Cm^2-factorization} and a $\{C_4^2,C_m^3\}-$factorization on the remaining $(C_m[4]-I)\oplus K_4-$factor by Lemma \ref{lemma:switch} gives us a  $\{C_4^{4r_1+2x+2},C_m^{4s_1+2x+3}\}-$factorization of the $K_{4mt}-tI$ where $tI$ gives a $1-$factor in $K_{4mt}$.

Then, since any even integer $r\geq 2$ can be written as $r=4r_1+2x+2$ for non-negative integers $r_1$ and $x$, a solution to $(4, m)-$HWP$(4mt; r, s)$ exists for any even $r\geq 2$ and odd {\em t} satisfying  $r+s=2mt-1=\frac{v-2}{2}$.

\noindent \textbf{Case 2:} Let {\em t} be even.

By (\ref{four}), $K_{4mt}$ has a  $\{(C_m[4])^{^{(mt-2/2}},K_{4,4},K_4\}-$factorization.

For $r_1+s_1+x=\frac{mt-2}{2}$, placing a $C_4-$factorization on $r_1$ of the $C_m[4]$'s, a $C_m-$factorization on $s_1$ of the $C_m[4]$'s, a $\{C_4^2$, $C_m^2\}-$factorization on  the $x$ of the  $C_m[4]$'s, two $C_4-$factors on the  $K_{4,4}-$factor and a $\{C_4^2,C_m^3\}-$factorization on the   remaining $(C_m[4]-I)\oplus K_4-$ factor yields a solution to \\ $(4, m)-$HWP$(4mt; r, s)$ for all even $r\geq 4$ except $t=2$ or $t=4$ when $m=3$.

Now, we consider the case $r=2$ and  {\em t} is even. Partitioning the vertices of $K_{4mt}$ into {\em t} sets of size $4m$ gives the equivalence: $K_{4mt}-I\cong t (K_{4m}-I^{'}) \oplus K_{4m:t}$ where $I^{'}$ is a $1-$factor of $K_{4m}$. By Case 1, $K_{4m}-I^{'}$ has a $\{C_4^2, C_m^{2m-3}\}-$factorization and  also from Theorem \ref{liu}, $K_{4m:t}$ has a $C_m-$ factorization for $t\neq 2$. Thus, $K_{4mt}-I$ has a $\{C_4^2, C_m^{2mt-3}\}-$factorization.
\end{proof}

\section{Proof of Main Result and Conclusion}

Combining the results of the previous section it is now possible to obtain the proof of the Theorem \ref{mainresult}. 

\begin{proof}[Proof of Theorem \ref{mainresult}]
Odd $r$ follows from Theorem \ref{thm:oddr} and even $r$ follows from  Theorem \ref{thm:evenr} with possible exceptions when $r=2$ and $v=8m$, and $v=24$ or $v=48$ when $m=3$. Theorem \ref{thm:v=24,m=3 exceptions} and  Theorem  \ref{thm:v=48,m=3 exceptions} cover some of these exceptions for $m=3$ and the remaining cases are $r=2$ when $v=8m$, and $v=24,48$ and $r=6$ when $m=3$.
\end{proof}

Although our solution is for odd {\em m}, our results in Lemmas are valid for even {\em m} as well and can be used in different constructions. Our result also complements the result of Fu and Huang \cite{fu}; altogether, existence of a solution to  $(4, m)-$HWP$(v; r, s)$ is shown for all integers $m\geq 3$ with a few possible exceptions. Regarding the results of Bonvicini and Buratti, only exception would be $r = 2$ when $v = 8m$ for odd $m \geq 5$. We can combine these results as follows.

\begin{thm}
For all positive integers r, s and $m\geq 3$, a solution to \\ $(4, m)-${\em HWP}$(v; r, s)$ exists if and only if $4|v$, $m|v$ and $r+s=\frac{v-2}{2}$ except  possibly when $r=2$ and $v=8m$ for $m \geq 5$ odd.
\end{thm}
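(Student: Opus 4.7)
The plan is to assemble this as a combination theorem, pulling together three existing bodies of work rather than proving anything genuinely new. Necessity is immediate from Lemma \ref{lemma:necessary}, so all the work is on the sufficiency direction, which I would split according to the parity of $m$.

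For even $m \geq 4$, I would invoke Fu and Huang \cite{fu} directly. Their result covers $4$-cycle and $m$-cycle factors for all even $m$, so sufficiency follows with no further argument. For odd $m \geq 3$, I would apply Theorem \ref{mainresult} proved in this paper, which yields sufficiency except possibly when $r=2$ and $v=8m$, or when $m=3$ with $v \in \{24, 48\}$ and $r=6$. Then I would cite the Bonvicini--Buratti paper referenced in the remark immediately preceding the present theorem: they settle all nine of the remaining $m=3$ cases from \cite{danziger}, which in particular eliminates the exceptions at $v=24$ and $v=48$ (the $r=6$ leftovers from Theorems \ref{thm:v=24,m=3 exceptions} and \ref{thm:v=48,m=3 exceptions}) and also disposes of $r=2$, $v=24=8\cdot 3$ for $m=3$. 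Combining these, the only surviving possible exceptions occur when $r=2$, $v=8m$, and $m \geq 5$ is odd, which is exactly the exception list in the statement.

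Concretely, I would structure the argument as follows: (i) remark that necessity is Lemma \ref{lemma:necessary}; (ii) for the sufficiency, observe that the conditions $4\mid v$ and $m\mid v$ together with $r+s=(v-2)/2$ imply $v$ is even; (iii) split on the parity of $m$ and apply \cite{fu} in the even case; (iv) in the odd case, apply Theorem \ref{mainresult} to handle everything outside the three families of possible exceptions; (v) finally, cite Bonvicini--Buratti to kill all $m=3$ exceptions and the $m=3$ subcase of $v=8m$, leaving only $r=2$, $v=8m$, $m \geq 5$ odd.

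There is no real mathematical obstacle here, since every ingredient is either proved earlier in the paper or attributed to an external source. The only minor bookkeeping issue is making sure the exception lists match up cleanly: Theorem \ref{mainresult} lists three types of potential exceptions, and I must verify that the Bonvicini--Buratti results indeed cover the two $m=3$ specific families plus the $v=8m=24$ case, so that the residual exception is exactly $\{(r,s,v,m) : r=2,\ v=8m,\ m \geq 5 \text{ odd}\}$. Once that check is made, the proof is simply a one- or two-sentence invocation of the cited results.
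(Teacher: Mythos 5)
Your proposal is correct and follows essentially the same route as the paper, which states this theorem as a combination of Fu--Huang for even $m$, Theorem \ref{mainresult} for odd $m$, and the Bonvicini--Buratti results to dispose of the $m=3$ exceptions (the paper in fact gives no formal proof, only the preceding discussion paragraph making exactly this argument). Your bookkeeping of the residual exception set $r=2$, $v=8m$, $m\geq 5$ odd also matches the paper's conclusion.
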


\section{Acknowledgements}

This work is supported by The Scientific and Technological Research Council of Turkey (T\"UB\.{I}TAK) under project number 113F033.

\section*{References}

\bibliography{mybibfile}

\bibliographystyle{amsplain}

\end{document}